\newtheorem{theorem}{Theorem}[section]
\newtheorem{proposition}[theorem]{Proposition}
\theoremstyle{definition}
\newtheorem{definition}[theorem]{Definition}
\theoremstyle{remark}
\def\N{\mathbb{N}}
\def\Z{\mathbb{Z}}
\begin{document}

\title[An elementary proof of Jin's theorem with a bound]
{An elementary proof of Jin's theorem
\\
with a bound}

\author{Mauro Di Nasso}

\address{Dipartimento di Matematica\\
Universit\`a di Pisa, Italy}

\email{dinasso@dm.unipi.it}

\date{}

\begin{abstract}
\emph{We present a short proof of Jin's theorem which is entirely elementary,
in the sense that no use is made of nonstandard analysis,
ergodic theory, measure theory, ultrafilters, or other advanced tools.
The given proof provides the explicit bound $1/\text{BD}(A)\cdot\text{BD}(B)$ to
the number of shifts $A+B+m_i$ that are needed to cover a thick set.}
\end{abstract}

\subjclass[2000]
{11B05, 11B13.}

\keywords{Upper Banach density, thick set, piecewise syndetic set,
sumsets, Jin's theorem}

\maketitle

\section*{Introduction}

Many results in combinatorial number theory are about
structural properties of sets of integers that
only depends on their largeness as given by the density.
A beautiful result in this area was proved in 2000 by Renling Jin
with the tools of nonstandard analysis.

\smallskip
\begin{itemize}
\item
\textbf{Jin's theorem}: \emph{If $A,B\subseteq\N$ have
positive upper Banach density then their sumset
$A+B$ is piecewise syndetic.}
\end{itemize}

\noindent
(The upper Banach density is a refinement of the usual upper asymptotic
density; a set is piecewise syndetic if it has ``bounded gaps"
in suitable arbitrarily long intervals. See \S 1 below for precise definitions).
Many researchers showed interest in Jin's result but were not comfortable
with nonstandard analysis. In answer to that, a few years later Jin himself \cite{jin2}
directly translated his nonstandard proof into ``standard" terms,
but unfortunately in this way ``certain degree of intuition and motivation
are lost" [\emph{cit.}].

\smallskip
%Jin's theorem raised the attention of several researchers, who tried to
%translate his nonstandard proof into more familiar terms, and to
%improve on it.
In 2006, with the use of ergodic theory, V. Bergelson, H. Furstenberg and
B. Weiss \cite{bfw} found a completely different proof of
that result, and improved on it by showing that the sumset
$A+B$ is in fact piecewise Bohr, a stronger property than
piecewise syndeticity. (This result was subsequently
stretched by J.T.~Griesmer \cite{gri} to cases where one of the
summands has null upper Banach density.)
Again by means of ergodic theory, V. Bergelson, M. Beiglb\"ock and A. Fish \cite{bbf}
elaborated a shorter proof and extended the validity of the theorem
to the general framework of countable amenable groups.
In 2010, M. Beiglb\"ock \cite{bei} found another proof by using
ultrafilters plus a bit of measure theory.
Recently, this author \cite{dn} applied nonstandard methods to
show several properties of difference sets, and gave yet another
different proof of Jin's result where an explicit bound to the number of
shifts of $A+B$ that are needed to cover arbitrarily large intervals is found.

\smallskip
In this paper we present a short proof of Jin's theorem in the strengthened
version mentioned above, which is entirely elementary and
hence easily accessible also for the non-specialists.
(Here, ``elementary" means that no use is made of
nonstandard analysis, measure theory and ergodic theory, ultrafilters,
or any other advanced tool.)
The underlying intuitions are close to some of the nonstandard arguments
in \cite{dn}, but of course formalization is different.
%As a general rule, we tried to keep the exposition in this paper
%as self-contained as possible.
We paid attention to keep the exposition in this paper self-contained.

\medskip
\textbf{Notation:} By $\N=\{1,2,3,\ldots\}$ we denote the set
of \emph{positive} integers. If not specified otherwise, lower-case letters
$a, b, c, x, y, z, \ldots$ will denote integer numbers, and upper-case letters
$A, B, C, \ldots$ will denote sets of integers.
By writing $[a,b]$ we always denote intervals of integers, \emph{i.e.}
$[a,b]=\{x\in\Z\mid a\le x\le b\}$.

\bigskip
\section{Jin's theorem with a bound}

\medskip
Let us start by recalling three important structural notions for sets of integers.

\medskip
\begin{definition}
Let $A\subseteq\Z$ be a set of integers.

\smallskip
\begin{itemize}
\item
$A$ is \emph{thick} if it covers intervals of arbitrary
length, \emph{i.e.} if for every $k\in\N$ there exists an interval
$I=[y+1,y+k]$ of length $k$ such that $I\subseteq A$.

\smallskip
\item
$A$ is \emph{syndetic} if it has bounded gaps,
\emph{i.e.} if there exists $k$ such that $A\cap I\ne\emptyset$
for every interval $I$ of length $k$.

\smallskip
\item
$A$ is \emph{piecewise syndetic} if it covers arbitrarily
large intervals of a syndetic set, \emph{i.e.} if
$A=B\cap C$ where $B$ is thick and $C$ is syndetic.
\end{itemize}
\end{definition}

\medskip
Remark that thickness and syndeticity are dual notions, in the sense that
$A$ is thick if and only if its complement $A^c$ is not syndetic.

\smallskip
Recall the \emph{difference set} and the \emph{sumset} of
two sets of integers $A,B\subseteq\Z$:
$$A-B\ =\ \{a-b\mid a\in A,\ b\in B\}\ ;\quad
A+B\ =\ \{a+b\mid a\in A,\ b\in B\}.$$

With obvious notation, we shall simply write $A-z$ to indicate
the shift $A-\{z\}$.
It is easily shown that $A$ is syndetic if and only if $A+F=\Z$ for a suitable
finite set $F$; and that $A$ is piecewise syndetic if and only if
$A+F$ is thick for a suitable finite set $F$.

\smallskip
Let us now turn to concepts of largeness for sets of integers.
A familiar notion in number theory is that of \emph{upper asymptotic density}
$\overline{d}(A)$ of set of natural numbers $A\subseteq\N$, which is defined as the
limit superior of the relative densities of its initial segments:
$$\overline{d}(A)\ =\ \limsup_{n\to\infty}\frac{|A\cap[1,n]|}{n}.$$

The upper Banach density refines the density $\overline{d}$ to sets
of integers by considering arbitrary intervals instead of just initial intervals.

\medskip
\begin{definition}
The \emph{upper Banach density} of a set $A\subseteq\Z$ is defined as:
$$\text{BD}(A)\ =\
\lim_{n\to\infty}\left(\max_{x\in\Z}\frac{|A\cap[x+1,x+n]|}{n}\right).$$
\end{definition}

\medskip
One needs to check that such a limit always exists, and in fact
$\text{BD}(A)=\inf_{n\in\N}a_n/n$ where $a_n=\max_{x\in\Z}|A\cap[x+1,x+n]|$.
In consequence, if $\text{BD}(A)\ge\alpha$ then for every $n$.
Trivially $\text{BD}(A)\ge\overline{d}(A)$ for every $A\subseteq\N$.
The following properties, which directly follow from the definitions,
will be used in the sequel:

\medskip
\begin{itemize}
\item
$\text{BD}(A)=1$ if and only if $A$ is thick.

\smallskip
\item
The family of sets with null Banach density is closed under finite unions, \emph{i.e.}
if $\text{BD}(A_i)=0$ for $i=1,\ldots,k$, then $\text{BD}(A_1\cup\ldots\cup A_k)=0$.

\smallskip
\item
The Banach density is invariant under shifts, \emph{i.e.} $\text{BD}(A+x)=\text{BD}(A)$.
\end{itemize}

\medskip
Remark that the upper Banach density is not additive,
\emph{i.e.} there exist disjoint sets $A,B$ such that
$\text{BD}(A\cup B)<\text{BD}(A)+\text{BD}(B)$. However, for
families of shifts of a given set, additivity holds:

\medskip
\begin{itemize}
\item
If $A+x_i$ are pairwise disjoint sets for $i=1,\ldots,k$, then
$\text{BD}\left(\bigcup_{i=1}^k A+x_i\right)=k\cdot\text{BD}(A_i)$.
\end{itemize}

\medskip
A consequence of the above property is the following:

\medskip
\begin{itemize}
\item
If $\text{BD}(A)>1/2$ then every number is the difference of two elements
in $A$, \emph{i.e.} $A-A=\Z$.
\end{itemize}

\medskip
To see this, notice that for every $z$ one has $A\cap(A-z)\ne\emptyset$,
as otherwise $\text{BD}(A\cup(A-z))=2\cdot\text{BD}(A)>1$, a contradiction.
So $a=a'-z$ for suitable $a,a'\in A$, and hence $z\in A-A$.

\smallskip
An important general property of difference sets is given by the following
well-known result.

\medskip
\begin{proposition}
If $\text{BD}(A)>0$ then $A-A$ is syndetic.
\end{proposition}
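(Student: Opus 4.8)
The plan is to extract a \emph{maximal} family of pairwise disjoint shifts of $A$ and show that the finitely many shift-amounts witness the syndeticity of $A-A$. Write $\alpha=\text{BD}(A)>0$. First I would observe that any family of pairwise disjoint shifts $A+d_1,\ldots,A+d_k$ must be finite: by the additivity property for disjoint shifts recalled above, $\text{BD}\big(\bigcup_{i=1}^{k}(A+d_i)\big)=k\alpha$, and since every upper Banach density is at most $1$ this forces $k\le 1/\alpha$. Hence, among all families of pairwise disjoint shifts of $A$ there is one, say $A+d_1,\ldots,A+d_k$, that is maximal, with $k\le 1/\alpha$.

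The key step is then to exploit maximality. Set $F=\{d_1,\ldots,d_k\}$ and let $z\in\Z$ be arbitrary. The shift $A+z$ cannot be disjoint from all of the $A+d_i$, since otherwise $A+z,A+d_1,\ldots,A+d_k$ would be a strictly larger pairwise disjoint family, contradicting maximality. So $(A+z)\cap(A+d_i)\ne\emptyset$ for some $i$. Here I would record the elementary equivalence $(A+z)\cap(A+d_i)\ne\emptyset\iff z-d_i\in A-A$: an element of the intersection yields $a+z=a'+d_i$ with $a,a'\in A$, whence $z-d_i=a'-a\in A-A$, and the converse is immediate. Consequently $z\in(A-A)+d_i\subseteq(A-A)+F$.

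Since $z$ was arbitrary, this gives $(A-A)+F=\Z$ with $F$ finite, which is precisely the criterion recalled above for $A-A$ to be syndetic. I expect the only point requiring care to be the bookkeeping of signs in the intersection equivalence; everything else follows formally from the maximality of the family together with the additivity of Banach density on disjoint shifts. As a bonus, the argument produces the explicit bound $|F|\le 1/\text{BD}(A)$ on the syndeticity constant of $A-A$.
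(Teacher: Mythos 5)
Your proof is correct, but it takes a genuinely different route from the paper's. The paper argues contrapositively through the thick/syndetic duality: if $A-A$ were not syndetic, its complement would contain a thick set $T$, inside which one constructs an infinite set $X=\{x_1<x_2<\ldots\}$ with $X-X\subseteq T$; the shifts $A-x_i$ are then pairwise disjoint, and taking $k$ of them with $k\cdot\text{BD}(A)>1$ contradicts the additivity of Banach density on disjoint shifts. You instead run a direct maximality argument: the same additivity property bounds every pairwise disjoint family of shifts by $1/\alpha$, so a family $A+d_1,\ldots,A+d_k$ of maximum size exists, and maximality forces every $A+z$ to meet some $A+d_i$, which unwinds (your sign bookkeeping is right) to $\Z=(A-A)+\{d_1,\ldots,d_k\}$. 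Both proofs share the same engine --- there can be at most $1/\alpha$ pairwise disjoint shifts of $A$ --- but yours buys two things: it avoids the construction of $X$ with $X-X\subseteq T$, which the paper only sketches (``it is not hard to construct''), and it yields the explicit quantitative conclusion $|F|\le 1/\alpha$, very much in the spirit of the bound $1/\alpha\beta$ in the paper's main theorem; in fact your one-shot maximality step is a finitary analogue of the iterative construction in the paper's Lemma 2, where disjoint shifts $E_n+m_i$ are accumulated until the density budget runs out. What the paper's version buys is expository: it exercises the thickness/syndeticity duality just introduced in that section. One small point worth making explicit in your write-up: when $z$ already equals some $d_i$, the intersection $(A+z)\cap(A+d_i)=A+z$ is nonempty simply because $\text{BD}(A)>0$ forces $A\ne\emptyset$, so the covering claim holds for every $z$ without exception.
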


\begin{proof}
If by contradiction $A-A$ was not syndetic, then
its complement $(A-A)^c$ would include a thick set $T$.
By the property of thickness, it is not hard to construct
an infinite set $X=\{x_1<x_2<\ldots\}$ such that
$X-X\subseteq T$. Since $(X-X)\cap(A-A)=\emptyset$, the sets in the family
$\{A-x_i\mid i=1\in\N\}$ are pairwise disjoint.
But this is not possible because if $k\in\N$ is such that $1/k<\text{BD}(A)$,
then one would get
$\text{BD}\left(\bigcup_{i=1}^k A-x_i\right)=
\sum_{i=1}^k\text{BD}(A-x_i)=k\cdot\text{BD}(A)>1$,
a contradiction.
\end{proof}

\medskip
Remark that the above property does \emph{not} extend to the general case $A-B$;
\emph{e.g.}, it is not hard to find thick sets $A,B,C$ such that
their complements $A^c,B^c,C^c$ are thick as well, and $A-B\subset C$.
However, $A-B$ is necessarily thick in case
the two sets are ``sufficiently dense". Precisely, the following holds:

\medskip
\begin{proposition}
Let $A\subseteq\Z$ be such that $\sup_{n\to\infty} n\cdot(a_n/n-\alpha)=+\infty$,
where $a_n=\max_{x\in\Z}|A\cap[x+1,x+n]|$.
If $\text{BD}(B)\ge 1-\alpha$, then $A-B$ is thick.
\end{proposition}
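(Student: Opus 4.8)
The plan is to show directly that $A-B$ contains intervals of arbitrary length, which is exactly thickness. The first step is to reformulate membership in the sumset: a point $w$ lies in $A-B$ precisely when $(B+w)\cap A\neq\emptyset$, and it suffices to witness this intersection inside a single, well-chosen window. So I fix an interval $I=[x+1,x+n]$ on which $A$ is abnormally dense, and call $w$ \emph{good} when $(B+w)\cap(A\cap I)\neq\emptyset$; since $A\cap I\subseteq A$, every good $w$ already belongs to $A-B$, and it is enough to produce a long interval of good points.

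Next I translate ``bad'' into a counting statement. If $w$ is not good, then every element of $B+w$ lying inside $I$ must avoid $A$, hence lands in $A^c\cap I$; therefore $|B\cap(I-w)|=|(B+w)\cap I|\le|A^c\cap I|$. This is where the two hypotheses enter. By the assumption $\sup_n(a_n-\alpha n)=+\infty$, for any prescribed $M$ I can choose $n$ and $x$ with $a_n\ge\alpha n+M$, so that $|A^c\cap I|\le(1-\alpha)n-M$; thus any $w$ with $|B\cap(I-w)|>(1-\alpha)n-M$ is automatically good. On the other side, from $\text{BD}(B)\ge 1-\alpha$ I get $b_n\ge(1-\alpha)n$, i.e.\ a single length-$n$ window $J_0$ with $|B\cap J_0|\ge(1-\alpha)n$; choosing $w_0$ with $I-w_0=J_0$ makes $w_0$ good with a surplus of exactly $M$ above the bad-threshold.

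The decisive step — and the only point I expect to be a real obstacle — is upgrading ``one very good point'' into ``a full interval of good points'', since thickness demands a gap-free run rather than merely a large average count. The key observation is that sliding the window $I-w$ by one unit changes $|B\cap(I-w)|$ by at most $1$, so $w\mapsto|B\cap(I-w)|$ is $1$-Lipschitz. Combined with the surplus $M$ at $w_0$, this forces $|B\cap(I-w)|\ge(1-\alpha)n-(M-1)>(1-\alpha)n-M$ for every $w$ with $|w-w_0|\le M-1$, so the whole interval $[w_0-(M-1),\,w_0+(M-1)]$ consists of good points and lies in $A-B$. As $M$ is arbitrary and the produced interval has length $2M-1$, the set $A-B$ is thick. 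The argument uses only a double count followed by a $1$-Lipschitz sliding estimate; notably it needs neither additivity of $\text{BD}$ nor any bound on the density of $A^c$, only the two one-sided estimates $|A^c\cap I|\le(1-\alpha)n-M$ and $|B\cap J_0|\ge(1-\alpha)n$.
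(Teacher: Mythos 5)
Your proof is correct and is essentially the paper's own argument in mirror image: you fix a window $I$ where $A$ has surplus $M$ above $\alpha n$ and slide $B$ against it, while the paper fixes the aligned pair $I$, $J$ and slides $A$ by $i=1,\ldots,k$, with your $1$-Lipschitz estimate on $w\mapsto|B\cap(I-w)|$ playing exactly the role of the paper's bound $|(A-i)\cap I|\ge|A\cap I|-i$, and your complement count for bad $w$ being the contrapositive of the paper's pigeonhole step $|(A-i)\cap I|+|(B+t)\cap I|>|I|$. Both proofs use the same two ingredients (the $\inf$-characterization $\text{BD}(B)=\inf_n b_n/n$ to find an equally long dense window for $B$, and the unbounded surplus of $A$ to absorb the shifts), so no substantive comparison is needed.
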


\begin{proof}
For every $k\in\N$, we show that an interval of length $k$ is included in $A-B$.
Let $N$ be such that $N\cdot(a_N/N-\alpha)>k$,
and pick an interval $I$ of length $N$ with $a_N=|A\cap I|$.
For every $i=1,\ldots,k$ we have that:
$$|(A-i)\cap I|\ \ge\ |A\cap I|-i\ >\ (\alpha\cdot N+k)-i\ \ge\
\alpha\cdot N.$$
Now recall that $\text{BD}(B)=\inf_{n\in\N}b_n/n$, so by the hypothesis
we can find an interval $J$ of length $N$ such that
$|B\cap J|\ge (1-\alpha)\cdot N$. Finally, pick $t$ such that
$t+J=I$. We claim that the interval $[t+1,t+k]\subseteq A-B$.
To show this, notice that for every $i=1,\ldots,k$ we have that
$$|(A-i)\cap I|+|(B+t)\cap I|\ =\ |(A-i)\cap I|+|B\cap J|\ >\
\alpha\cdot N + (1-\alpha)\cdot N\ =\ N\ =\ |I|.$$
So, $(A-i)\cap(B+t)\cap I\ne\emptyset$, and we can find
$a\in A$ and $b\in B$ such that $a-i=b+t$, and hence
$t+i\in A-B$.
\end{proof}

\medskip
Notice that $\text{BD}(A)>\alpha$ implies
$\sup_{n\to\infty} n\cdot(a_n/n-\alpha)=+\infty$,
which in turn implies $\text{BD}(A)\ge\alpha$; however,
neither implication can be reversed.
The fact that $A-B$ is thick whenever $\text{BD}(A)+\text{BD}(B)>1$
was first proved by M. Beiglb\"ock, V. Bergelson and A. Fish in \cite{bbf};
in fact, their proof actually shows the (slightly) stronger property
given in the previous proposition.

\smallskip
What presented so far is just a hint of the rich combinatorial structure of
sumsets and sets of differences, whose investigation seems still far from
being completed (see \emph{e.g.} the monographies \cite{tv,ru}).
In this area, a relevant contribution was given in 2000 by
Renling Jin. By working in the setting of hypernatural numbers
of nonstandard analysis, he showed that the appropriate structural
property to be considered for differences of dense sets is \emph{piecewise}
syndeticity.

\medskip
\begin{theorem}[Jin -- 2000]
If $A,B\subseteq\N$ have positive upper Banach density then their sumset
$A+B$ is piecewise syndetic.
\end{theorem}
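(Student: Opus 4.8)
The plan is to deduce the theorem from the thickness Proposition above by a density-increment (``boosting'') argument, which will also furnish the stated bound. First I would reduce the sumset to a difference set: since $A+B=A-(-B)$ and $\text{BD}$ is clearly invariant under the reflection $x\mapsto -x$ as well as under shifts, it is equivalent to prove that $A-B$ is piecewise syndetic whenever $\alpha:=\text{BD}(A)>0$ and $\beta:=\text{BD}(B)>0$, and then apply this to the pair $(A,-B)$. Recalling that $A-B$ is piecewise syndetic exactly when $(A-B)+F$ is thick for some finite $F$, and that $(A-B)+F=A-B'$ where $B'=\bigcup_{f\in F}(B-f)$ is a finite union of shifts of $B$, the goal becomes: produce a finite union of shifts $B'$ of $B$ with $\text{BD}(B')>1-\alpha$. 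Indeed, once this holds, choose $\alpha'<\alpha$ with $1-\alpha'\le\text{BD}(B')$; since $\text{BD}(A)=\alpha>\alpha'$ the condition $\sup_n n\,(a_n/n-\alpha')=+\infty$ is satisfied, so the Proposition yields that $A-B'$ is thick, which is precisely piecewise syndeticity of $A-B$.

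The heart of the matter is therefore a boosting lemma: finite unions of shifts of $B$ attain upper Banach density arbitrarily close to $1$. I would prove this greedily. Suppose $U$ is a finite union of shifts of $B$ with $\gamma:=\text{BD}(U)<1$; the claim is that there is a shift $t$ with $\text{BD}\big(U\cup(B+t)\big)\ge\gamma+\beta(1-\gamma)$. To locate it, fix a long interval $I$ on which $U$ realises its density, $|U\cap I|\approx\gamma|I|$, together with an interval $J$ of the same length with $|B\cap J|\ge\beta|J|$; align $J$ onto $I$ by a shift $t_0$ and then average $\text{BD}$ over small perturbations $t=t_0+\tau$. The crucial input is that, because $\text{BD}$ is an \emph{infimum}, every sufficiently long window $W$ satisfies $|U\cap W|\le(\gamma+\varepsilon)|W|$; this uniform upper bound is exactly what the averaging needs to push the overlap $|(B+t)\cap U|$ below $(\gamma+\varepsilon)\beta|I|$, so that the genuinely new points $(B+t)\setminus U$ contribute at least $\beta(1-\gamma)|I|$ on top of the $\gamma|I|$ points of $U$ already present on $I$. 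Iterating: while $\gamma\le 1-\alpha$ one has $1-\gamma\ge\alpha$, so each step raises the density by at least $\alpha\beta$; hence after at most $(1-\alpha)/(\alpha\beta)<1/(\alpha\beta)$ steps the density exceeds $1-\alpha$, which is the source of the announced bound on the number of shifts.

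The step I expect to be the main obstacle is precisely this single-shift increment. The averaging readily produces, for each witnessing interval $I$, \emph{some} shift that works on that $I$; but piecewise syndeticity forces us to commit to one fixed finite set of shifts, so I must secure a \emph{single} $t$ that raises the global Banach density, rather than a different good shift on each scale. This is exactly the point that nonstandard arguments finesse for free by working inside a single infinite interval, where $U$ and $B$ are dense simultaneously; elementarily it has to be arranged by anchoring the perturbed shift to the macroscopic alignment $J\mapsto I$ (so that $U$ stays dense on $I$) while the microscopic averaging, controlled by the uniform upper bound above, drives the overlap down. Turning this local gain into one shift valid for the supremum that \emph{defines} $\text{BD}\big(U\cup(B+t)\big)$ is the delicate heart of the proof, and is where essentially all the care must be concentrated.
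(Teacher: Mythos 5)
Your overall route---reduce piecewise syndeticity of $A-B$ to finding a finite union of shifts $B'=\bigcup_{f\in F}(B-f)$ with $\text{BD}(B')>1-\alpha$, then invoke the thickness Proposition of \S 1---is sound in outline, and it is genuinely different from the paper's proof: it is essentially the ``filling lemma'' strategy of Beiglb\"ock--Bergelson--Fish \cite{bbf}, specialized to $\Z$, whereas the paper works through Lemma 1 (pigeonholing a shift $z_n$ at each scale $n$ to make $E_n=(A_n-x_n-z_n)\cap(B_n-y_n)$ dense in $[1,n]$) and Lemma 2. Your reduction steps are all correct: $(A-B)+F=A-B'$, the equivalence with thickness of $(A-B)+F$, and the choice of $\alpha'<\alpha$ to meet the Proposition's hypothesis $\sup_n n\,(a_n/n-\alpha')=+\infty$.

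However, there is a genuine gap, and it sits exactly at the step you yourself flag as the ``delicate heart'': the single-shift increment $\text{BD}\bigl(U\cup(B+t)\bigr)\ge\gamma+\beta(1-\gamma)$ is not proved, and the mechanism you describe cannot prove it as stated. Your averaging produces, for each length $n$, a good shift of the form $t=t_0(n)+\tau$ with $\tau$ in a fixed bounded window; but $t_0(n)$, the macroscopic alignment of the scale-$n$ witness interval $J$ onto $I$, is unbounded in $n$ because the intervals witnessing the densities of $U$ and of $B$ wander arbitrarily far apart in $\Z$. Pigeonholing the microscopic part $\tau$ therefore still leaves a \emph{different} shift at each scale, and a single interval of one fixed length on which $U\cup(B+t)$ is dense gives \emph{no} lower bound on $\text{BD}\bigl(U\cup(B+t)\bigr)$: since $\text{BD}$ is the limit (equivalently the infimum) of $a_n/n$, one needs dense intervals of arbitrarily large length for the \emph{same} $t$ (a finite set contains one dense interval yet has null Banach density). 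The claim is in fact false for arbitrary $U,B$ with the given densities---take $U$ uniformly $\gamma$-dense on the positive half-line and empty on the negative one, and $B$ the mirror image with density $\beta$; then $\text{BD}\bigl(U\cup(B+t)\bigr)=\max(\gamma,\beta)$ for every $t$---so any correct proof must couple the \emph{locations} where $U$ and $B$ are dense, not just their densities. The standard repair is to anchor all densities to one fixed sequence of intervals $J_n$ witnessing $\text{BD}(B)$ (passing to subsequences so the relative densities of the successive unions along $J_n$ converge), to note that long intervals are almost invariant under shifts $t\in[1,H]$ with $H$ fixed, to average only over this \emph{fixed} window using the uniform bound $|B\cap W|\le(\beta+\varepsilon)|W|$ for all long $W$, and then to pigeonhole a single $t^*\in[1,H]$ that works for infinitely many $n$; iterating gives along-sequence density at least $1-(1-\beta)^k$, hence $\text{BD}(B')>1-\alpha$ for suitable $k$. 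This uniformization over scales---extracting one fixed finite set of shifts valid for infinitely many $n$---is precisely the service that the paper's Lemma 2 (the $\Gamma_i,\Lambda_i$ construction with Banach-density bookkeeping on the index set) performs in its different decomposition; your proposal correctly identifies the obstacle but leaves it unresolved, so the argument is incomplete at its central step. (The bookkeeping for the bound $|F|\le 1/\alpha\beta$ would also need care because of the $\varepsilon$-losses in each increment, but that is secondary: the statement under review is Jin's theorem itself.)
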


\medskip
As mentioned in the introduction, Jin's theorem has been recently
re-proved by other means and with some improvements.
Here we shall present an elementary proof of the following
strengthening from \cite{dn}, where an explicit bound is given on the
number of shifts that are needed to cover a thick set.
(Recall that a set $C$ is piecewise syndetic if and only if
$C+F$ is thick for a suitable finite set $F$.)

\medskip
\begin{theorem}
[Jin -- with a bound]
Let $A,B\subseteq\Z$ have positive
upper Banach densities $\text{BD}(A)=\alpha$ and $\text{BD}(B)=\beta$, respectively.
Then there exists a finite set $F$ such that
$|F|\le 1/\alpha\beta$ and $(A-B)+F$ is thick.
\end{theorem}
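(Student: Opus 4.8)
The plan is to reduce the statement to the thickness criterion proved above (that $A-B'$ is thick whenever $\text{BD}(B')\ge 1-\alpha'$ and $\text{BD}(A)>\alpha'$), by replacing $B$ with a union of finitely many of its translates. The key algebraic observation is that for any $x_1,\dots,x_k$ one has $A-\bigcup_{i=1}^k(B+x_i)=\bigcup_{i=1}^k\big(A-B-x_i\big)=(A-B)+F$ with $F=\{-x_1,\dots,-x_k\}$. Hence, writing $B'=\bigcup_{i=1}^k(B+x_i)$, it suffices to exhibit translates with $|F|=k\le 1/\alpha\beta$ and $\text{BD}(B')\ge 1-\alpha'$ for some $\alpha'<\alpha$: since $\text{BD}(A)=\alpha>\alpha'$ forces $\sup_n n(a_n/n-\alpha')=+\infty$, the thickness proposition then yields that $(A-B)+F=A-B'$ is thick.

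So the whole problem becomes a covering question for $B$ alone: how many translates of a set of density $\beta$ are needed so that their union has density at least $1-\alpha'$? I would attack this greedily. Starting from a single translate, at each step I adjoin a new translate $B+x$ chosen to absorb as large as possible a share of the part of $\Z$ not yet covered. An averaging (double-counting) argument over the candidate shifts $x$ — exploiting that $B$ meets on average a proportion $\beta$ of any window, and that by the syndeticity of $B-B$ one can keep the newly added translate essentially disjoint from the previous ones so that the additivity of $\text{BD}$ on disjoint shifts applies — should guarantee that each step captures at least a $\beta$-fraction of the still-uncovered density. The uncovered density then contracts by a factor $1-\beta$ per step, so it falls below $\alpha'$ after $k$ steps as soon as $(1-\beta)^k\le\alpha'$; since $\ln(1/\alpha')/\beta<1/\alpha'\beta$, this happens with $k\le 1/\alpha\beta$, which is exactly the asserted bound.

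The delicate point — and the step I expect to be the real obstacle — is that $\text{BD}$ is a $\limsup$ over windows rather than a genuine measure, so a translate that swallows uncovered mass in one window need not do so at other scales. What is required is a \emph{single} finite set $F$ for which the density bound $\text{BD}(B')\ge 1-\alpha'$ holds honestly, i.e.\ simultaneously along arbitrarily long windows, rather than merely window-by-window; making the greedy gains persist in the limit, and keeping the count tied precisely to $1/\alpha\beta$, is where the argument must be carried out with care. A viable alternative, should the global bookkeeping prove awkward, is to avoid forming $B'$ and instead replay directly the overlap/pigeonhole count used in the thickness proposition, manufacturing for each target length an interval inside $(A-B)+F$; there the same difficulty resurfaces as the need to ensure that one and the same $F$ works for every length.
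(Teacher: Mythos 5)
There is a genuine gap, and it is exactly the one you flagged yourself in your third paragraph: the greedy step cannot be carried out as stated, and repairing it requires precisely the machinery you have not supplied. An averaging argument in the style of the paper's Lemma 1 does show that, given a current union $B'_j$ with $\text{BD}(B'_j)=\delta<1$ and given a window length $n$, there is a shift $z=z(n)$ such that $B'_j\cup(B+z)$ has density at least roughly $\delta+\beta(1-\delta)$ in \emph{some} window of length $n$. But the shift depends on $n$. Since $\text{BD}$ is the infimum over $n$ of the maximal window density at length $n$, to conclude $\text{BD}\bigl(B'_j\cup(B+z)\bigr)\ge\delta+\beta(1-\delta)$ you need a \emph{single} $z$ that works simultaneously at all scales, and nothing in the double counting provides it; taking a different $z$ for each $n$ gives no lower bound on the Banach density of any one union. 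This is not bookkeeping to be deferred: it is the same uniformity problem that the paper's Lemma 2 is built to solve, via a least-counterexample induction in which the set of ``good'' indices $n$ is itself measured by upper Banach density (the sets $\Gamma_i$ of positive density versus the exceptional sets $\Lambda_i$ of null density), so that positivity survives each of the finitely many stages. Your appeal to the syndeticity of $B-B$ to keep the new translate ``essentially disjoint'' from the previous ones also points the wrong way: in the paper, disjointness of translates is used only for the \emph{upper} bound $k\le 1/\gamma$ (a counting argument inside $[1,N+m_k]$), whereas in your scheme the new translate must \emph{overlap} the uncovered region, and no disjointness hypothesis can be enforced there.

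Beyond the missing step, the reduction itself is costlier than the theorem. Since $\alpha=\text{BD}(A)$ may be arbitrarily small, you need $\text{BD}(B')\ge 1-\alpha'$ with $1-\alpha'$ arbitrarily close to $1$; the statement that for every $B$ with $\text{BD}(B)=\beta>0$ and every $\varepsilon>0$ some finite union of translates satisfies $\text{BD}(B+F)\ge 1-\varepsilon$ is true, but it is a theorem of essentially the same depth as Jin's result (it is how the ergodic proofs proceed, through the Furstenberg correspondence), and an elementary proof of it with quantitative control on $|F|$ would have to rediscover something like Lemma 2 anyway. Your arithmetic, for what it is worth, is fine: $k\approx\ln(1/\alpha')/\ln\bigl(1/(1-\beta)\bigr)\le\ln(1/\alpha')/\beta\le 1/\alpha\beta$ for $\alpha'$ close to $\alpha$, since $\ln(1/\alpha')<1/\alpha'$. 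But the paper obtains the bound $1/\alpha\beta$ by an entirely different decomposition: Lemma 1 shifts a window of $A$ of length $n^2$ onto a window of $B$ of length $n$ to produce sets $E_n\subseteq[1,n]$ with $|E_n|/n\to\alpha\beta$, and the bound $|F|\le 1/\alpha\beta$ then falls out of the disjointness count in Lemma 2, with thickness of $(A-B)+F$ immediate from $E_n-E_n\subseteq (A-B)-t_n$. Your closing alternative (replaying the overlap count for each target length with one fixed $F$) is in effect the paper's route, and the device that makes one $F$ serve every length is precisely the positive-density induction you would need to supply.
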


\medskip
Notice that if $A,B\subseteq\N$ are sets of natural numbers then $A+B=A-(-B)$,
where $-B=\{-b\mid b\in B\}$. As trivially $\text{BD}(B)=\text{BD}(-B)$,
the above theorem immediately yields Jin's result about sumsets.

\bigskip
\section{The elementary proof}

\medskip
By the definition of upper Banach density, we can pick two sequences of integers
$\langle x_n\mid n\in\N\rangle$ and $\langle y_n\mid n\in\N\rangle$
such that if we put:

\medskip
\begin{itemize}
\item
$A_n=A\cap[x_n+1,x_n+n^2]$

\smallskip
\item
$B_n=B\cap[y_n+1,y_n+n]$
\end{itemize}

\smallskip
\noindent
then $\lim_{n\to\infty}|A_n|/{n^2}=\alpha$ and
$\lim_{n\to\infty}|B_n|/n=\beta$.
As the first step in the proof, for every $n$
we shall find a suitable shift of
$A_n$ that meets $B_n$ on a set whose relative density approaches $\alpha\beta$
as $n$ goes to infinity. To this end, we need the following lemma
from \cite{dn}. (In order to keep this paper self-contained, we
re-prove it here.)

\bigskip
\noindent
\textbf{Lemma 1.}
\emph{Let $N,n\in\N$. If $C\subseteq[1,N]$ and $D\subseteq[1,n]$
then there exists $z$ such that
$$\frac{|(C-z)\cap D|}{n}\ \ge\
\frac{|C|}{N}\cdot\frac{|D|}{n}-\frac{|D|}{N}.$$}

\begin{proof}
Let $\vartheta:\N\to\{0,1\}$ the characteristic function of $C$. Then:
\begin{eqnarray*}
\nonumber
\sum_{x=1}^N\left(\sum_{d\in D}\vartheta(x+d)\right) & = &
\sum_{d\in D}\left(\sum_{x=1}^N\vartheta(x+d)\right)\ =\
\sum_{d\in D}|C\cap[d+1,N]|
\\
\nonumber
{} & \ge &
\sum_{d\in D}(|C|-d)\ \ge\ |D|\cdot(|C|-n).
\end{eqnarray*}
By the \emph{pigeonhole principle}, there must
be at least one $z$ such that
$$\frac{1}{n}\sum_{d\in D}\vartheta(z+d)\ \ge\
\frac{1}{n}\cdot\frac{|D|\cdot(|C|-n)}{N}\ =\
\frac{|C|}{N}\cdot\frac{|D|}{n}-\frac{|D|}{N}.$$
Finally, notice that
$$\frac{|(C-z)\cap D|}{n}\ =\ \frac{|(D+z)\cap C|}{n}\ =\
\frac{1}{n}\sum_{d\in D}\vartheta(z+d).$$
\end{proof}

\medskip
For every $n$, apply the above lemma where $C=A_n-x_n\subseteq[1,n^2]$
and $D=B_n-y_n\subseteq[1,n]$. (Notice that $|C|=|A_n|$ and $|D|=|B_n|$.)
Then we can pick a suitable sequence
$\langle z_n\mid n\in\N\rangle$ such that
$$\frac{|(A_n-x_n-z_n)\cap(B_n-y_n)|}{n}\ \ge\
\frac{|A_n|}{n^2}\cdot\frac{|B_n|}{n}-\frac{|B_n|}{n^2}.$$
Now put:

\smallskip
\begin{itemize}
\item
$E_n=(A_n-x_n-z_n)\cap(B_n-y_n)\subseteq[1,n]$.
\end{itemize}

\medskip
Passing the above inequality to the limit, we obtain that
$$\lim_{n\to\infty}\frac{|E_n|}{n}\ \ge\ \alpha\beta.$$

In the second part of the proof we shall use the fact that
any sequence of subsets of $[1,n]$ whose relative
densities have a positive limit as $n$ approaches infinity, satisfies a relevant
combinatorial property about the corresponding difference sets.
Precisely:

\bigskip
\noindent
\textbf{Lemma 2.}
\emph{For $n\in\N$, let $E_n\subseteq[1,n]$.
If $\lim_{n\to\infty}|E_n|/n=\gamma>0$ then there exists
a finite set $F$ with $|F|\le 1/\gamma$
and such that the following property is satisfied:
\begin{itemize}
\item[$(\star)$]
$\text{BD}\left(\{n\mid [1,m]\subseteq (E_n-E_n)+F\}\right)>0$
for every $m\in\N$.
\end{itemize}}

\begin{proof}
We inductively define a finite increasing sequence
$\sigma=\langle m_i\mid i=1,\ldots,k\rangle$.
%where $k\le 1/\gamma$, and show that the finite set
%$F=\{m_1,\ldots,m_k\}$ satisfies property $(\star)$.
Set $m_1=0$. If property $(\star)$ is satisfied by $F=\{0\}$, then put
$\sigma=\langle m_1\rangle$, and stop. Otherwise,
let $m_2\in\N$ be the least counterexample. So,
$\Gamma_1=\{n\mid [1,m_2-1]\subseteq(E_n-E_n)+m_1\}$
has positive upper Banach density, but
$\Lambda_1=\{n\in\Gamma_1\mid m_2\in (E_n-E_n)+m_1\}$
%\{n\in\Gamma_1\mid (E_n+m_1)\cap(E_n+m_2)\ne\emptyset\}
has null upper Banach density. Notice that for
every $n\in\Gamma_1\setminus\Lambda_1$ one has
$(E_n+m_1)\cap(E_n+m_2)=\emptyset$.
If for every $m\in\N$ the set of all $n\in\Gamma_1$ such that
$[1,m]\subseteq\bigcup_{i=1}^2(E_n-E_n)+m_i$
has positive upper Banach density, then put $\sigma=\langle m_1,m_2\rangle$
and stop. Otherwise, let $m_3\in\N$ be the least counterexample. So,
the set $\Gamma_2=\{n\in\Gamma_1\mid [1,m_2-1]\subseteq\bigcup_{i=1}^2(E_n-E_n)+m_i\}$
has positive Banach density, but
$\Lambda_2=\{n\in\Gamma_2\mid m_3\in\bigcup_{i=1}^2(E_n-E_n)+m_i\}$
%\{n\in\Gamma_2\mid (E_n+m_i)\cap(E_n+m_3)=\emptyset\ \text{for at least one }i=1,2\}$$
has null Banach density. Notice that for
every $n\in\Gamma_2\setminus\Lambda_2$ one has
$(E_n+m_i)\cap(E_n+m_3)=\emptyset$ for $i=1,2$.
Iterate this process. We claim that
we must stop at a step $k\le 1/\gamma$.
To see this, we show that whenever $m_1<\ldots<m_k$
are defined, one necessarily has $k\le 1/\gamma$.
This is trivial for $k=1$, so let us assume $k\ge 2$.

Notice that $\Lambda_1\cup\ldots\cup\Lambda_k$ has null Banach density,
and so $X=\Gamma_k\setminus(\Lambda_1\cup\ldots\cup\Lambda_k)$ has
positive Banach density (and hence it is infinite).
Since $X\subseteq\Gamma_i\setminus\Lambda_i$ for all $i$,
for every $N\in X$ the sets in the family $\{E_N+m_i\mid i=1,\ldots k\}$
are pairwise disjoint.
Now, every $E_N+m_i\subseteq[1,N+m_k]$, and so we
obtain the following inequality:
$$N+m_k\ \ge\ \left\vert\bigcup_{i=1}^k(E_N+m_i)\right\vert\ =\
\sum_{i=1}^k|E_N+m_i|\ =\ k\cdot|E_N|,$$
and hence
$$\frac{|E_N|}{N}\ \le\ \frac{1}{k}+\frac{m_k}{N}.$$
By taking limits as $N\in X$ approaches infinity,
one gets the desired inequality $\gamma\le 1/k$.
Finally observe that, by the definition of
$\sigma=\langle m_i\mid i=1,\ldots,k\rangle$,
for every $n\in\Gamma_k$ and for every $m\in\N$
we have the inclusion $[1,m]\subseteq\bigcup_{i=1}^k(E_n-E_n)+m_i$.
This shows that property $(\star)$ is fulfilled
by setting $F=\{m_1,\ldots,m_k\}$.
\end{proof}

\medskip
By the above Lemma where $\gamma=\alpha\beta>0$,
we can pick a finite set $F$ with $|F|\le 1/\alpha\beta$
and such that property $(\star)$ is satisfied by the sets
$$E_n\ =\ (A_n-x_n-z_n)\cap(B_n-y_n).$$
So, for every $m$ there exists $n$ (actually ``densely many" $n$) such that:
$$[1,m]\ \subseteq\ (E_n-E_n)+F\ \subseteq\
(A_n-x_n-z_n)-(B_n-y_n)+F\ \subseteq\ A-B+F-t_n,$$
and hence $[t_n+1,t_n+m]\subseteq A-B+F$, where
we denoted $t_n=x_n-y_n+z_n$. This shows that $A-B+F$ is thick,
and the proof is complete.

\bigskip
\section{Open problems}

\begin{enumerate}
\item
Lemma 2 states a much stronger property than
needed for the proof of the main theorem.
Can one derive a stronger result by a full use of that lemma?

\smallskip
\item
We saw in \S 1 that $A-B$ is thick whenever
$\text{BD}(A)+\text{BD}(B)>1$.
Can one combine this fact with similar arguments as the ones presented in this paper,
and prove interesting structural properties about $A-B$, $A-C$, $B-C$
under the assumption that $\text{BD}(A)+\text{BD}(B)+\text{BD}(C)>1$?
\end{enumerate}

\bigskip

\end{document}